\newcommand{\e}{\varepsilon}
\newcommand{\w}{\omega}
\newcommand{\IR}{\mathbb R}
\newcommand{\IN}{\mathbb N}
\newcommand{\C}{\mathcal C}
\newtheorem{theorem}{Theorem}
\newtheorem{lemma}{Lemma}
\newtheorem{claim}{Claim}
\newtheorem{corollary}{Corollary}
\newtheorem{proposition}{Proposition}
\theoremstyle{definition}
\newtheorem{definition}{Definition}
\newtheorem{remark}{Remark}
\title{Returning functions with closed graph are continuous}
\author{Taras Banakh, Ma\l gorzata Filipczak, Julia W\'odka}
\address{T.Banakh: Ivan Franko National University of Lviv (Ukraine) and Jan Kochanowski University in Kielce (Poland)}
\email{t.o.banakh@gmail.com}
\address{M. Filipczak: Faculty of Mathematics and Computer Sciences, Lodz University, ul. Stefana Banacha 22, 90-238 \L\'od\'z (Poland)}
\email{malgorzata.Flipczak@wmii.uni.lodz.pl}
\address{J.~W\'odka: L\'od\'z University of Technology, Institute of Mathematics, ul. Wólcza\'nska 215, 90-924 L\'od\'z (Poland)}
\email{juliawodka@gmail.com}
\subjclass{26A15, 54C08, 54D05}
\keywords{Continuous function, returning function, closed graph, path-inductive space}
\begin{document}
\begin{abstract} A function $f:X\to \IR$ defined on a topological space $X$ is called {\em returning} if for any point $x\in X$ there exists a positive real number $M_x$ such that for every path-connected subset $C_x\subset X$ containing the point $x$ and any $y\in C_x\setminus\{x\}$ there exists a point $z\in C_x\setminus\{x,y\}$ such that $|f(z)|\le \max\{M_x,|f(y)|\}$. A topological space $X$ is called {\em path-inductive} if a subset $U\subset X$ is open if and only if for any path $\gamma:[0,1]\to X$ the preimage $\gamma^{-1}(U)$ is open in $[0,1]$. The class of path-inductive spaces includes all first-countable locally path-connected spaces and all sequential locally contractible space. 
 We prove that a function $f:X\to \IR$ defined on a path-inductive space $X$ is continuous if and only of it is returning and has closed graph. This implies that a (weakly) \'Swi\c atkowski function $f:\IR\to\IR$ is continuous if and only if it has closed graph, which answers a problem of Maliszewski, inscibed to Lviv Scottish Book.
\end{abstract} 
\maketitle

Let $X$ and $Y$ be a topological spaces. We say that function $f\colon
X\rightarrow Y$ has {\em closed graph} if its graph $$\Gamma_f:=\{\langle x,f(x)\rangle:x\in X\}$$ is closed
in the product $X\times Y$. It is well-known that each continuous function $f:X\to Y$ to a Hausdorff topological space $Y$ has closed graph. Trivial examples show that the converse statement is not true in general. 

There exist many (algebraic or topological) properties of functions, which being combined with the closedness of the graph yield the continuity, see Tao's blog \cite{Tao}. For example, by the classical Closed Graph Theorem, a linear operator between Banach spaces is continuous if and only if it has closed graph.

Known topological properties implying the continuity of functions with closed graph include the  Darboux property, the subcontinuity, the almost continuity, the near continuity, the $B$-quasicontinuity, the weak Gibson property, etc. (see \cite{Alas}, \cite{BB}, \cite{Ber}, \cite{Bors}, \cite{Burg}, \cite{BP}, \cite{DN}, \cite{HH}, \cite{LG}, \cite{MN}, \cite{Moors}, \cite{PS}, \cite{W}).

The inspiration for our present investigation was a problem, written by the third author to the {\tt Lviv 	Scotish Book}\footnote{http://www.math.lviv.ua/szkocka/} on 01.05.2018. Actually, the problem was originally asked several years ago
by Aleksander Maliszewski: \textit{Is any \'{S}wi\c{a}tkowski function with
	closed graph continuous?} Our main theorem gives an affirmative answer to
this question and a posteriori to many its modifications (for peripherally continuous functions,
peripherally bounded functions,  Darboux functions, etc.)

One of the most general properties, responsible for the continuity of real-valued functions with closed graph is introduced in the following definition.

\begin{definition} A real-valued function $f:X\to \IR$ on a topological space $X$ is defined to be 
\begin{itemize}
\item {\em returning at a point $x\in X$} if there exists a positive real number $M_x$ such that for every path-connected subset $C_x\subset X$ containing the point $x$ and any $y\in C_x\setminus\{x\}$ there exists a point $z\in C_x\setminus\{x,y\}$ such that $|f(z)|\le \max\{M_x,|f(y)|\}$;
\item {\em returning} if $f$ is returning at each point.
\end{itemize}
\end{definition}


The following theorem is the main result of this paper.

\begin{theorem}\label{main} A function $f:\IR\to\IR$ is continuous if and only if $f$ has closed graph and is returning.
\end{theorem}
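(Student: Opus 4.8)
\emph{The plan.} I will prove the two implications separately; the forward one is routine and the converse carries all the weight.

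\emph{Continuity implies closed graph and returning.} A continuous map into the Hausdorff space $\IR$ has closed graph, so only the returning property needs to be verified. At a point $x$ put $M_x:=|f(x)|+1$. If $C_x\subseteq\IR$ is path-connected (hence an interval) and contains $x$, and $y\in C_x\setminus\{x\}$, then the whole segment between $x$ and $y$ lies in $C_x$; by continuity at $x$ every point $z$ of that segment sufficiently close to $x$ satisfies $|f(z)|<M_x$, and we may choose such a $z$ distinct from $x$ and $y$.

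\emph{Closed graph and returning imply continuity.} Suppose not. From the closed graph together with the Bolzano--Weierstrass theorem one obtains two facts at no cost: $f$ is continuous at $x$ iff $f$ is bounded on some neighbourhood of $x$, and each set $\{|f|\le n\}$ is closed, so each $\{|f|>n\}$ is open. Hence the set $D$ of discontinuity points of $f$ is closed and nonempty, and at every point of $D$ the function $|f|$ is unbounded on each of its neighbourhoods. The main device is a \emph{cliff lemma}: if $q\in\IR$, $\eta>0$, and a number $M$ with $M\ge M_q$ and $M>|f(q)|$ satisfies $(q,q+\eta)\subseteq\{|f|>M\}$ (or the mirror-image left-hand condition), then the returning property fails at $q$. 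Indeed, put $\mu:=\inf\{|f(z)|:z\in(q,q+\tfrac\eta2]\}\ge M$; since $\{|f|\le\mu\}$ is closed and $|f(q)|<\mu$, a short compactness argument using the closed graph shows that $|f|$ attains the value $\mu$ somewhere in $(q,q+\tfrac\eta2]$ and that the \emph{leftmost} such point $y_0$ satisfies $y_0>q$. On the interval $C':=[q,y_0]$ one has $|f|>\mu$ throughout $(q,y_0)$, so applying ``returning at $q$'' with $C_x=C'$ and $y=y_0$ produces a point $z\in(q,y_0)$ with $|f(z)|\le\max\{M_q,\mu\}=\mu$, a contradiction. This lemma would finish the proof instantly if discontinuity points were ``cliff points'' with $|f|\to\infty$ on one side; the real obstacle --- and the reason the statement is not easy --- is that at a discontinuity point of a \emph{returning} function the hypothesis of the lemma can never be met, because $\{|f|\le M\}$ is forced to accumulate there from both sides. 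So the point is to manufacture a cliff.

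\emph{Manufacturing a cliff (the hard part).} This is done by a Baire category argument followed by a two-step descent. Since $D$ is closed in $\IR$ it is completely metrizable, hence a Baire space; writing $D=\bigcup_{n}\bigl(D\cap\{|f|\le n\}\bigr)$ as a countable union of relatively closed sets, Baire's theorem yields $p\in D$, $\rho>0$ and $n_0\in\IN$ with $|f|\le n_0$ on $D\cap(p-\rho,p+\rho)$. As $f$ is unbounded near $p$, say from the right, set $M_0:=\max\{M_p,\,n_0+1\}$ (so $M_0\ge M_p$ and $M_0>n_0\ge|f(p)|$). If $\{|f|\le M_0\}$ does not accumulate at $p^+$, the cliff lemma applies to $p$ and we are done; otherwise the open set $\{|f|>M_0\}$ has components $(a,b)$ with left endpoint $a$ arbitrarily close to $p$. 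Choosing $a$ close enough to $p$, the closed graph forces $|f(a)|<M_0$ (the boundary values converge to $|f(p)|$), hence $f$ is discontinuous at $a$ and, since $|f|>M_0$ just to the right of $a$, unbounded near $a^+$; rename $q:=a$, so $q\in D\cap(p-\rho,p+\rho)$ and $|f(q)|\le n_0$. Now repeat the step at $q$ with $M_1:=\max\{M_q,M_0\}\ (\ge M_0)$: either the cliff lemma applies at $q$, or $\{|f|>M_1\}$ has a component with left endpoint $a'$ arbitrarily close to $q$, and since $M_1\ge M_0$ that component lies inside the component $(q,b)$ of $\{|f|>M_0\}$, whence $a'\in(q,b)$ and $|f(a')|>M_0>n_0$. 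But $a'$, chosen close to $q$, is again a discontinuity point lying in $(p-\rho,p+\rho)$, so $|f(a')|\le n_0$ --- a contradiction. Therefore $D=\varnothing$ and $f$ is continuous. (As $\IR$ is first-countable and locally path-connected it is path-inductive, so the theorem is also a special case of the general result announced in the abstract; the argument above is what I would write if that machinery were not yet at hand.)
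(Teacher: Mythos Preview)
Your argument is correct. Two steps in the last paragraph are compressed enough to warrant a remark. The clause ``since $|f|>M_0$ just to the right of $a$, unbounded near $a^+$'' is not literal: what makes it true is that $|f(a)|<M_0$ and the closed graph together forbid $f$ from being bounded on any right half-neighbourhood of $a$ (else $f(z)\to f(a)$ as $z\to a^+$, contradicting $|f(z)|>M_0>|f(a)|$). And in the final move the contradiction arrives one line earlier than you say: the left endpoints $a'_k\to q^+$ satisfy $|f(a'_k)|\le M_1$, so the closed graph forces $f(a'_k)\to f(q)$, while $a'_k\in(q,b)$ gives $|f(a'_k)|>M_0>n_0\ge|f(q)|$; you need not first place $a'$ in $D$. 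These are matters of exposition, not gaps.

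The paper uses the same three ingredients --- the local-boundedness criterion for closed-graph functions, a Baire/weak-discontinuity step on the discontinuity set, and the returning property at a point where $|f|$ is bounded below on one side --- but organises them differently. It first shows that $f$ extends continuously to the closure of each component of $C(f)$ (its Claim~1, essentially your cliff lemma specialised to such boundary points and proved there via the Intermediate Value Theorem on the continuous restriction), deduces that $\IR\setminus C(f)$ has no isolated points (Claim~2), and then obtains a contradiction through another IVT application inside a shrinking family of components. You bypass the component analysis and the IVT entirely: your cliff lemma is proved from the closed graph alone, hence applies at any point (not only at boundaries of $C(f)$), and the two-step descent from $p$ to $q$ to $a'$ finishes in at most two moves. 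Your version is a bit more economical and isolates the role of the returning hypothesis cleanly; the paper's version has the side benefit of recording the structural fact that the discontinuity set is perfect, which you do not need.
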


For the proof of this theorem we shall need some auxiliary notions and results.

For two points $a<b$ on the real line let $[a,b]$ and $(a,b)$ be the closed and open intervals with end-points $a,b$, respectively. To avoid confusion between notations for open intervals and ordered pairs, an ordered pair of real numbers $a,b$ will be denoted by $\langle a,b\rangle$.

A function $f:X\to\IR$ defined on a topological space $X$ is called 
\begin{itemize}
\item {\em weakly discontinuous} if for any non-empty closed subset $A\subset\IR$ the set $C(f{\restriction}A)$ of continuity points of the restriction $f{\restriction}A$ has non-empty interior in $A$;
\item {\em locally bounded} at a point $x\in X $ if $x$ has a neighborhood $U_x\subset X$ such that the set $f(U_x)$ is bounded in $\IR$. 
\end{itemize}

The following lemma is known (see \cite{Baggs}, \cite{Burg}) and its proof is included for the convenience of the reader.

\begin{lemma}\label{l1} Assume that a function $f:X\to\IR$, defined on a closed subset $X\subset\IR$ has closed graph. Then  
\begin{enumerate}
\item $f$ is weakly discontinuous;
\item $f$ is continuous at a point $x\in X$ if and only if $f$ is locally bounded at $x$;
\item the set $C(f)$ of continuity points is open and dense in $X$.
\end{enumerate}
\end{lemma}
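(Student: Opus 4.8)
The plan is to prove the three items in the order (2), (1), (3), since (2) is the tool that converts a Baire-category statement into a statement about points of continuity. For (2), the forward implication needs nothing: if $f$ is continuous at $x$, then $\{y\in X:|f(y)-f(x)|<1\}$ is a neighbourhood of $x$ on which $f$ is bounded. For the converse, assume $f$ is bounded by some $N\in\IN$ on a neighbourhood $U_x$ of $x$ in $X$, and let $(x_n)_{n\in\IN}$ be a sequence in $X$ converging to $x$. Eventually $x_n\in U_x$, so $(f(x_n))$ takes values in the compact interval $[-N,N]$; hence the sequence $(\langle x_n,f(x_n)\rangle)$ in $X\times[-N,N]$ has a convergent subsequence, whose limit lies in $\Gamma_f$ by closedness of the graph and therefore equals $\langle x,f(x)\rangle$. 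Thus $f(x)$ is the unique cluster value of $(f(x_n))$ inside the compact set $[-N,N]$, so $f(x_n)\to f(x)$; as $X\subset\IR$ is first countable, $f$ is continuous at $x$.

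Next I isolate a category core valid for every function $g\colon Y\to\IR$ with closed graph on a nonempty closed set $Y\subset\IR$: the set $C(g)$ has nonempty interior in $Y$. For each $n\in\IN$ the set $F_n:=\{y\in Y:|g(y)|\le n\}$ is closed in $Y$, by the same compactness argument: if $y_k\to y$ in $Y$ with $y_k\in F_n$, a subsequence of $(\langle y_k,g(y_k)\rangle)\subset Y\times[-n,n]$ converges to a point of $\Gamma_g$, necessarily $\langle y,g(y)\rangle$, so $|g(y)|\le n$. Since $Y=\bigcup_{n\in\IN}F_n$ and $Y$, being closed in $\IR$, is a Baire space, some $F_n$ has nonempty interior $U$ in $Y$; as $g$ is bounded on the neighbourhood $U$ of each of its points, item (2) applied to $g$ gives $U\subset C(g)$. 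Item (1) is now immediate: for a nonempty closed $A\subset X$ (equivalently, $A$ closed in $\IR$ and contained in $X$), the restriction $f{\restriction}A$ has closed graph $\Gamma_f\cap(A\times\IR)$, so the core yields that $C(f{\restriction}A)$ has nonempty interior in $A$, which is exactly weak discontinuity of $f$.

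For (3), item (2) identifies $C(f)$ with the set of points at which $f$ is locally bounded, and that set is open because boundedness of $f$ on an open set $U$ is inherited by every point of $U$. To see that $C(f)$ is dense, suppose a nonempty open $W\subset X$ satisfies $W\cap C(f)=\emptyset$, and put $A:=\operatorname{cl}_X(W)$, a nonempty closed subset of $X$ (and of $\IR$). By (1) there is a nonempty set $U$, open in $A$, with $U\subset C(f{\restriction}A)$; writing $U=V\cap A$ with $V$ open in $X$ and using $W\subset A$ we get $U\cap W=V\cap W$, which is nonempty (as $W$ is dense in $A$) and open in $X$, hence contained in $\operatorname{int}_X(A)$. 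At any point of $U\cap W$ the continuity of $f{\restriction}A$ upgrades to continuity of $f$ (the standard fact that at an interior point of a subset, continuity of the restriction is continuity), contradicting $W\cap C(f)=\emptyset$; hence $C(f)$ is dense, completing (3). The only step that is not pure formalism is the converse in (2): one must exploit compactness of bounded intervals of $\IR$ together with closedness of the graph to exclude oscillation. Everything else is standard Baire-category packaging and bookkeeping with relative topologies (the fiddliest being the verification in (3) that $U\cap W$ is open in $X$ and lands in $\operatorname{int}_X(A)$).
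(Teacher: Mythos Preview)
Your proof is correct and follows essentially the same route as the paper's: both hinge on the Baire category theorem applied to the closed sublevel sets $\{|f|\le n\}$ (the paper additionally intersects with $[-n,n]$ to get compact sets $K_n$ whose compact graph yields continuity of $f{\restriction}K_n$ in one stroke, whereas you argue the same point sequentially). Your treatment of the density in (3) is in fact more detailed than the paper's, which simply asserts that weak discontinuity implies density of $C(f)$ without spelling out the passage through $A=\mathrm{cl}_X(W)$.
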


\begin{proof} For every $n\in\IN$ consider the set $K_n=\{x\in X:\max\{|x|,|f(x)|\}\le n\}$ and observe that it is compact, being the projection of the compact subset $\Gamma_f\cap [-n,n]^2$ onto the real line. Next, observe that the restriction $f{\restriction}K_n$ is continuous since it has compact graph $\Gamma_f\cap [-n,n]^2$. For every closed subset $A\subset X$, the Baire Theorem yields a number $n\in\IN$ such that $A\cap K_n$ contains a non-empty relatively open subset $U$ of $A$. Taking into account that $U=C(f{\restriction}U)\subset C(f{\restriction}A)$, we see that $f$ is weakly discontinuous.
\smallskip

Assuming that $f$ is locally bounded at some point $x\in X$, we can find a bounded neighborhood $U_x\subset X$ such that $f(U_x)$ is bounded and hence $U_x\cup f(U_x)\subset (-n,n)$. Then $U_x\subset K_n$ and the continuity of $f{\restriction}U_x$ follows from the continuity of $f{\restriction}K_n$.
\smallskip

The weak discontinuity of $f$ implies the density of the set $C(f)$ in $X$. To see that $C(f)$ is open in $X$, take any continuity point $x\in X$ of $f$ and find a neighborhood $U_x\subset X$ whose image $f(U_x)$ is bounded in the real line. By Lemma~\ref{l1}(2), $f{\restriction}U_x$ is continuous and hence $U_x\subset C(f)$.
\end{proof}

\begin{proof}[Proof of Theorem~\ref{main}] The ``only if'' part is trivial. To prove the ``if'' part, assume that $f:\IR\to\IR$ is a returning function with closed graph. By Lemma~\ref{l1}(3), the set $C(f)$ of continuity points of $f$ is open and dense in $\IR$. If $C(f)=\IR$, then $f$ is continuous and we are done. So, assume that $C(f)\ne\IR$.

Let $\C$ be the family of connected components of the set $C(f)$. It follows that each set $C\in\C$ is an open connected subset of the real line.

\begin{claim}\label{cl1} For each $C\in\C$ and its closure $\bar C$ in $\IR$ the restricted function $f{\restriction}\bar C$ is continuous. 
\end{claim}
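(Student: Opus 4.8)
The plan is to prove that $f{\restriction}\bar C$ is continuous at each point of $\bar C$. Since $f$ is continuous on the open interval $C=(a,b)$, only the endpoints of $C$ that lie in $\IR$ need attention, and by symmetry (reflecting the real line) it suffices to treat a finite left endpoint $a$: I will show that $\lim_{x\to a^+}f(x)$ exists, is finite, and equals $f(a)$, which gives continuity of $f{\restriction}\bar C$ at $a$ because points of $\bar C$ near $a$ lie in $[a,a+\e)$.

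First I would analyse the one-sided cluster set, putting $L^+=\limsup_{x\to a^+}f(x)$ and $\ell^+=\liminf_{x\to a^+}f(x)$ in $[-\infty,+\infty]$, and show $\ell^+=L^+$. Suppose not, and choose two distinct reals $r_1<r_2$ in the non-degenerate interval $(\ell^+,L^+)$. For each such $r$, continuity of $f$ on $C$ and the intermediate value theorem yield, inside every right-neighbourhood $(a,a+\eta)$ of $a$, a point where $f$ exceeds $r$ and a point where $f$ is below $r$, hence (by the IVT on the compact subinterval between them, which lies in $C$) a point $w$ with $f(w)=r$; letting $\eta\to 0$ produces $w_n\to a$ with $f(w_n)=r$, so $(a,r)\in\overline{\Gamma_f}=\Gamma_f$ and $f(a)=r$ --- impossible for two distinct values of $r$. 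Therefore $\ell^+=L^+=:L\in[-\infty,+\infty]$, that is, $\lim_{x\to a^+}f(x)=L$. If $L$ is finite we are done: for any sequence $x_n\downarrow a$ in $C$ we get $(x_n,f(x_n))\to(a,L)\in\Gamma_f$, whence $f(a)=L$.

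It remains to exclude $L=\pm\infty$, and it is here --- and only here --- that the returning property is used; this is the step I expect to be the main obstacle. Replacing $f$ by $-f$ (still returning and still with closed graph) we may assume $L=+\infty$, and we contradict the assumption that $f$ is returning at $a$ with some constant $M_a>0$. Using $\lim_{x\to a^+}f=+\infty$, fix $\delta\in(0,b-a)$ with $f>M_a$ on $(a,a+\delta]$. Because $f\to+\infty$ at $a^+$, the infimum $v$ of $f$ over $(a,a+\delta]$ is attained, and only at points bounded away from $a$, so the level set $\{x\in(a,a+\delta]:f(x)=v\}$ is nonempty and compact; let $y$ be its least element. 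Then $y\in(a,b)$, $|f(y)|=v>M_a$, and $f(x)>v$ for every $x\in(a,y)$ (since $x<y=\min$ of the level set forces $f(x)\ne v$, while $f(x)\ge v$). Now $C_a:=[a,y]$ is a path-connected subset of $\IR$ containing $a$, we have $y\in C_a\setminus\{a\}$, and for every $z\in C_a\setminus\{a,y\}=(a,y)$ we get $|f(z)|=f(z)>v=\max\{M_a,|f(y)|\}$. This contradicts the defining property of a function returning at $a$ with constant $M_a$. Hence $L=+\infty$ is impossible, and by symmetry so is $L=-\infty$; thus $L$ is finite and the proof is complete.
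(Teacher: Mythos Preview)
Your proof is correct and follows essentially the same strategy as the paper's: first use the closed graph together with the intermediate value theorem to show that the one-sided limit at a boundary point exists in $[-\infty,+\infty]$, then use the returning property with an extremal-point argument to rule out the infinite case. The only cosmetic differences are that the paper works with $|f|$ throughout (so it shows $\liminf|f|=\infty$ rather than splitting into the $\pm\infty$ cases) and invokes Lemma~\ref{l1}(2) to pass from local boundedness to continuity, whereas you argue directly that the finite limit must equal $f(a)$ by closedness of the graph; your extremal point is the least minimiser on $(a,a+\delta]$, while the paper's is the largest element of a sublevel set---the same trick mirrored.
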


\begin{proof} Since $C(f)\ne\IR$, the set $C$ is of one of three types: $(-\infty,b)$, $(a,+\infty)$ or $(a,b)$ for some real numbers $a,b$. First assume that $C=(-\infty,b)$ for some $b\in\IR$. We need to prove that $f{\restriction}(-\infty,b]$ is continuous. 
By Lemma~\ref{l1}(2), it suffices to show that  $f{\restriction}(-\infty,b]$ is locally bounded at $b$.

To derive a contradiction, assume that $f|(-\infty,b]$ is not locally bounded at $b$. We claim that $$\liminf_{x\to b-0}|f(x)|=\infty.$$ Assuming that $\liminf_{x\to b-0}|f(x)|<\infty$, we can fix any $M>\max\{|f(b)|,\liminf_{x\to b-0}|f(x)|\}$ and conclude that  for every $\e>0$ the interval $(b-\e,b)$ contains a point $x$ such that $|f(x)|<M$. 

Since the graph $\Gamma_f$ is closed and the points $\langle b,M\rangle$ and $\langle b,-M\rangle$ do not belong to $\Gamma_f$, there exists $\e>0$ such that the set $[b-\e,b]\times\{-M,M\}$ is disjoint with $\Gamma_f$. Since $f{\restriction}(-\infty,b]$ is not locally bounded at $b$, there exists a point $y\in (b-\e,b)$ such that $|f(y)|>M>\liminf_{x\to b-0}|f(x)|$. Now the definition of $\liminf_{x\to b-0}|f(x)|<M$ yields a point $x\in (y,b)$ such that $|f(x)|<M<|f(y)|$.  By the Mean Value Theorem, for some point $z\in[x,y]\subset (-\infty,b)\subset C(f)$ we have $|f(z)|=M$ and hence $\langle z,f(z)\rangle\in \Gamma_f\cap \big([b-\e,b]\times\{-M,M\}\big)$, which contradicts the choice of $\e$. This contradiction completes the proof of the equality $\liminf_{x\to b-0}|f(x)|=\infty$.

Since the function $f$ is returning at $b$, there exists a positive real constant $M$ such that for every $\lambda<b$ there exists a point $x\in (\lambda,b)$ such that $|f(x)|\le\max\{M,|f(\lambda)|\}$. Since $\liminf_{x\to b-0}|f(x)|=\infty$, there exists a point $c\in (-\infty,b)$ such that $|f(c)|>M$. Taking into account that the function $f{\restriction}(-\infty,b)$ is continuous and tends to infinity at $b$, we conclude that the set $L:=\{x\in [c,b]:|f(x)|\le|f(c)|\}$ is compact and hence has the largest element $\lambda\in L$.  By the choice of the constant $M$, there exists a point $x\in (\lambda,b)$ such that $|f(x)|\le\max\{M,|f(\lambda)|\}\le |f(c)|$. Then $x\in L$ and hence $x\le\lambda$, which contradicts the inclusion $x\in (\lambda,b)$.
This contradiction completes the proof of the continuity of $f{\restriction}(-\infty,b]$.
 
 By analogy we can consider the cases $C=(a,+\infty)$ and $C=(a,b)$ for some $a<b$.
 \end{proof}
 
 \begin{claim}\label{cl2} Any distinct sets $C_1,C_2\in\C$ have disjoint closures.
 \end{claim}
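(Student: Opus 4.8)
The plan is to argue by contradiction, using Claim~\ref{cl1} as the essential input. Suppose some distinct $C_1,C_2\in\C$ satisfy $\bar C_1\cap\bar C_2\ne\emptyset$. The first step is a purely topological observation about the real line: $C_1$ and $C_2$ are disjoint open intervals (being connected components of the open set $C(f)$), and two disjoint open intervals of $\IR$ can have intersecting closures only if they abut, i.e.\ share a single common endpoint $b\in\IR$, with one of them lying immediately to the left of $b$ and the other immediately to the right. In particular $b\notin C_1\cup C_2$; moreover, since $b$ lies strictly between a point of $C_1$ and a point of $C_2$, it cannot belong to any member of $\C$, so $b\notin C(f)$, i.e.\ $f$ is discontinuous at $b$.

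The second step contradicts this discontinuity. Say $b$ is the right endpoint of $C_1$, so that $\bar C_1$ equals $[a,b]$ or $(-\infty,b]$. By Claim~\ref{cl1} the restriction $f{\restriction}\bar C_1$ is continuous, which forces $\lim_{x\to b-0}f(x)=f(b)$. Symmetrically, $b$ is the left endpoint of $C_2$, and Claim~\ref{cl1} applied to $\bar C_2$ gives $\lim_{x\to b+0}f(x)=f(b)$. Combining the two one-sided limits yields $\lim_{x\to b}f(x)=f(b)$, so $f$ is continuous at $b$, contradicting $b\notin C(f)$. (Alternatively, once $b\in C(f)$ is established one may invoke Lemma~\ref{l1}(3): openness of $C(f)$ provides $\delta>0$ with $(b-\delta,b+\delta)\subset C(f)$, and then $C_1\cup(b-\delta,b+\delta)\cup C_2$ is a connected subset of $C(f)$ meeting both $C_1$ and $C_2$, hence contained in a single component, so $C_1=C_2$.)

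I do not expect a genuine obstacle here: the real work is already done in Claim~\ref{cl1}, and Claim~\ref{cl2} is a short corollary. The only point requiring a little care is the elementary case analysis on the shape of the intervals $\bar C_1$ and $\bar C_2$ (bounded versus one-sided unbounded), which affects the notation but not the substance of the argument, together with the initial remark that distinct components of an open subset of $\IR$ can have closures meeting in at most one common endpoint.
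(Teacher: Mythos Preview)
Your proof is correct and follows essentially the same approach as the paper: both argue by contradiction, invoke Claim~\ref{cl1} to get continuity of $f$ on $\bar C_1$ and $\bar C_2$, and deduce that $C_1$ and $C_2$ lie in a common component of $C(f)$. The paper's version is phrased slightly more abstractly---it notes that $\bar C_1\cup\bar C_2$ is convex, takes its interior $U$, and observes $f{\restriction}U$ is continuous so $C_1\cup C_2\subset U\subset C(f)$---whereas you isolate the common endpoint $b$ explicitly and check the two one-sided limits; but this is the same argument, and indeed your parenthetical alternative at the end is exactly the paper's route.
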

 
 \begin{proof} Assuming that $\bar C_1\cap\bar C_2\ne \emptyset$, we conclude that the set $\bar C_1\cup\bar C_2$ is convex and so is its interior $U$. The continuity of the restrictions $f{\restriction}\bar C_1$ and $f{\restriction}\bar C_2$ established in Claim~\ref{cl1} implies that $f{\restriction}U$ is continuous and hence $U\subset C(f)$ and $C_1\cup C_2\subset U\subset C$ for some connected component $C\in\C$, which is not possible as distinct connected components of $C(f)$ are disjoint.
 \end{proof}
 
 Claim~\ref{cl2} implies that the complement $F=\IR\setminus C(f)$ has no isolated points. By Lemma~\ref{l1}, the function $f$ is weakly discontinuous, so there exist points $a<b$ such that $\emptyset\ne [a,b]\cap F\subset C(f{\restriction}F)$. By the continuity of $f$ on the compact set $K:=[a,b]\cap F$, there exists a number $M\in\IN$ such that $|f(x)|<M$ for all $x\in K$. The restriction $f{\restriction}[a,b]$ is discontinuous and hence unbounded. Then we can choose a sequence $(x_n)_{n\in\w}$ of points of $[a,b]$ such that $|f(x_n)|>M+n$ for all $n\in\w$. For every $n\in\w$ let $C_n\in\C$ be the unique connected component of $C(f)$, containing the point $x_n$.    Replacing $x_n$ by a suitable subsequence, we can assume that $|f(x_n)|>\max\{|f(x)|:x\in [a,b]\cap\bigcup_{k<n}\bar C_k\}$ for all $n\in\w$. This ensures that the components $C_n$ are pairwise disjoint and hence $\mathrm{diam}(C_n)\to 0$.
 
 Replacing $(x_n)_{n\in\w}$ by a suitable subsequence, we can assume that $C_n\subset [a,b]$ for all $n\in\w$ and  $(x_n)_{n\in\w}$ converges to some point $c\in [a,b]$, which is a discontinuity point of $f$ as $\lim_{n\to\infty}|f(x_n)|=\infty\ne |f(c)|$. Then $|f(c)|<M$ by the choice of $M$. The graph $\Gamma_f$ of $f$ is closed and hence is disjoint with the set $[c-\e,c+\e]\times\{-M,M\}$ for some $\e>0$. For every $n\in\w$ write the component $C_n$ as $C_n=(a_n,b_n)$ for some points $a_n<b_n$ in the set $[a,b]\cap F$. Since $x_n\to c$ and $|b_n-a_n|\to 0$, there exists $n\in\w$ such that $(a_n,b_n)\subset [c-\e,c+\e]$.
 
  Since $|f(a_n)|<M<|f(x_n)|$, the Mean Value Theorem applied to the continuous function $f{\restriction}[a_n,x_n]$ yields a point $z_n\in[a_n,z_n]$ with $|f(z_n)|=M$. Then $\langle z_n,f(z_n)\rangle\in\Gamma_f\cap \big([c-\e,c+\e]\times\{-M,M\}\big)=\emptyset$, which is a desired contradiction completing the proof of Theorem~\ref{main}.
 \end{proof}  
 
 Theorem~\ref{main} admits a generalization to real-valued functions defined on  path-inductive topological spaces. 
\smallskip

By a {\em path} in a topological space $X$ we understand any continuous function $\gamma:[0,1]\to X$.

We define a topological space $X$ to be {\em path-inductive} if a subset $U\subset X$ is open if and only if for any path $\gamma:[0,1]\to X$ the preimage $f^{-1}(U)$ is open in $[0,1]$.

\begin{proposition}\label{p:pathi} A topological space $X$ is path-inductive if $X$ is either sequential and locally contractible or $X$ is first-countable and locally path-connected.
\end{proposition}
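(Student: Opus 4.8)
The plan is to prove Proposition~\ref{p:pathi} by verifying the defining property of path-inductivity in each of the two cases separately. In both cases one implication is free: if $U\subset X$ is open then $\gamma^{-1}(U)$ is open in $[0,1]$ for every path $\gamma$, simply because paths are continuous. So the whole content is the converse: assuming $U\subset X$ is a set whose preimage $\gamma^{-1}(U)$ is open in $[0,1]$ for every path $\gamma:[0,1]\to X$, we must show $U$ is open in $X$, i.e. that every point $x\in U$ has a neighborhood contained in $U$.

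First I would treat the case where $X$ is first-countable and locally path-connected. Fix $x\in U$ and a decreasing neighborhood base $(V_n)_{n\in\IN}$ at $x$; by local path-connectedness we may assume each $V_n$ is path-connected. Suppose for contradiction that no $V_n$ lies in $U$, so we can pick $x_n\in V_n\setminus U$; then $x_n\to x$. The idea is to concatenate paths: for each $n$ choose a path in $V_n$ from $x_n$ to $x$, and splice these together on the intervals $[\tfrac1{n+1},\tfrac1n]$, sending $0\mapsto x$. The resulting map $\gamma:[0,1]\to X$ is continuous at every $t>0$ trivially and continuous at $0$ because $x_n\to x$ and the pieces shrink into smaller and smaller $V_n$. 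Then $\gamma^{-1}(U)$ contains $0$ but $\gamma(\tfrac1n)=x_n\notin U$ with $\tfrac1n\to 0$, so $\gamma^{-1}(U)$ is not open at $0$, a contradiction. Hence some $V_n\subset U$ and $U$ is open.

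Next I would treat the sequential and locally contractible case. Here the strategy is: since $X$ is sequential, $U$ is open as soon as it is sequentially open, i.e. as soon as every sequence $(x_n)$ converging to a point $x\in U$ is eventually in $U$. So suppose $x_n\to x\in U$. Pick a contractible neighborhood $W$ of $x$, with a contraction $H:W\times[0,1]\to W$ from $\mathrm{id}_W$ to the constant map at $x$ (after possibly adjusting to make $x$ the basepoint of the contraction). Discarding finitely many terms, assume all $x_n\in W$. For each $n$, the map $t\mapsto H(x_n,t)$ is a path in $W$ from $x_n$ to $x$; concatenating these over the intervals $[\tfrac1{n+1},\tfrac1n]$ and sending $0\mapsto x$ gives a path $\gamma:[0,1]\to X$ — continuity at $0$ uses that $H$ is continuous and $(x_n,1)\to(x,1)$. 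Again $\gamma^{-1}(U)$ must be open and contains $0$, forcing $\gamma(\tfrac1n)=x_n\in U$ for large $n$; thus $U$ is sequentially open, hence open.

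The main obstacle in both cases is the continuity of the concatenated path $\gamma$ at the limit point $t=0$; everything else is routine. What makes it work is that the $n$-th piece of $\gamma$ has image inside a neighborhood of $x$ that shrinks (in the first-countable case) or inside a fixed contractible neighborhood with the ``contraction time'' forcing the image of $[\tfrac1{n+1},\tfrac1n]$ to stay near $x$ once $x_n$ is near $x$ (in the sequential case), so that for any neighborhood $O$ of $x$ the preimage $\gamma^{-1}(O)$ contains a tail $[0,\delta]$. One should also note that the reparametrizations gluing the pieces on $[\tfrac1{n+1},\tfrac1n]$ must be chosen affine so that the concatenation is continuous at the interior breakpoints $\tfrac1n$; this is standard but worth stating. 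A minor care point is ensuring the contraction can be taken to fix $x$ and land in $W$, which is why ``locally contractible'' is the hypothesis rather than merely ``contractible''.
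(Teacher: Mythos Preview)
Your overall strategy coincides with the paper's: in each case one produces a sequence $x_n\to x$ with $x_n\notin U$ and $x\in U$, and then assembles a single path $\gamma$ through the $x_n$ accumulating at $x$, so that $\gamma^{-1}(U)$ is not open at $0$. So the approach is correct and essentially the same.

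There is, however, a genuine gap in your concatenation step. You place on each interval $[\tfrac1{n+1},\tfrac1n]$ a path from $x_n$ to $x$, assert $\gamma(\tfrac1n)=x_n$, and claim continuity at all $t>0$. These are incompatible: if the $n$-th piece has value $x_n$ at $\tfrac1n$ and value $x$ at $\tfrac1{n+1}$, then for continuity the $(n{+}1)$-st piece must also take the value $x$ at $\tfrac1{n+1}$, yet by your recipe it takes the value $x_{n+1}$ there. Affine reparametrization does not repair this; the endpoints simply do not match. The paper handles this by running \emph{loops} on each subinterval: it goes from $x$ out to $x_n$ and back to $x$ (traversing the contraction $h(x_n,\cdot)$ forward and then backward via an auxiliary $H:V\times[0,2]\to X$), so that every breakpoint has value $x$ and the gluing is automatic. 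The same there-and-back trick fixes your first-countable argument as well.

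A smaller issue is your justification of continuity at $0$ in the sequential case: the observation ``$(x_n,1)\to(x,1)$'' only controls one endpoint of each piece, not the entire image $H(\{x_n\}\times[0,1])$. What is actually needed is a tube-lemma argument on the compact slice $\{x\}\times[0,1]$: once the contraction is arranged to fix $x$ (which you rightly flag as a care point), continuity of $H$ yields a neighborhood $U$ of $x$ with $H(U\times[0,1])\subset O$ for any prescribed neighborhood $O$ of $x$, and then $x_n\in U$ for large $n$ does the job. The paper carries out exactly this compactness argument.
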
 
 
 \begin{proof} Assume that $X$ is either sequential and locally contractible or $X$ is first-countable and locally path-connected.
  Given a non-open set $A\subset X$  we should find a path $\gamma:[0,1]\to X$ such thar $f^{-1}(A)$ is not open in $[0,1]$.
 
By the sequentiality or the first-countability of $X$, there exists a sequence $\{x_n\}_{n\in\w}\subset X\setminus A$ that converges to a point $x_\w\in A$. 
 
If $X$ is locally contractible, then there exists a neighborhood $V\subset X$ of the point $x_\w$ and a continuous map $h:V\times[0,1]\to X$ such that $h(x,0)=x$ and $h(x,1)=x_\w$ for all $x\in V$. Replacing $(x_n)_{n\in\w}$ by a suitable subsequence, we can assume that $\{x_n\}_{n\in\w}\subset V$. Consider the homotopy $H:V\times [0,2]\to X$ defined by 
$$H(x,t)=\begin{cases}
h(x,1-t)&\mbox{if $t\in[0,1]$}\\
h(x,t-1)&\mbox{if $t\in[1,2]$}
\end{cases}\mbox{ \ for $(x,t)\in V\times[0,2]$},
$$
and observe that $H(x,0)=H(x,2)=x_\w$ and $H(x,1)=x$ for every $x\in V$.

Define a path $\gamma:[0,1]\to X$ by $\gamma(0)=x_\w$ and $\gamma(t)=H(x_n,2^{n+2}t-2)$ where $n\in\w$ is the unique number such that $\frac1{2^{n+1}}<t\le \frac1{2^n}$. It is clear that the function $\gamma$ is continuous at each point $t\in(0,2]$. To see that $\gamma$ is continuous at $0$, fix any neighborhood $W\subset X$ of $\gamma(0)=x_\w$ and using the continuity of the function $H$ at points of the compact set $\{x_\w\}\times[0,2]\subset H^{-1}(x_\w)\subset H^{-1}(W)$, find a neighborhood $U\subset V$ of the point $x_\w$ such that $H(U\times[0,2])\subset W$. Since the sequence $(x_n)_{n\in\w}$ converges to $x_\w$, there exists a number $m\in\w$ such that $\{x_n\}_{n\ge m}\subset U$. Then $$\gamma([0,\tfrac1{2^m}])\subset \{x_\w\}\cup\bigcup_{n\ge m}H(\{x_n\}\times[0,2])\subset H(U\times[0,2])\subset W,$$witnessing that $\gamma$ is continuous at $0$. 
\smallskip

Next, assume that the space $X$ is first-countable and locally path-connected.
In this case we can find a countable neighborhood base $\{U_n\}_{n\in\w}$ at $x_\w$ such that for every $n\in\w$ and point $x\in U_{n+1}$ there exists a path $\gamma_x:[0,1]\to U_n$ such that $\gamma_x(0)=x$ and $\gamma_x(1)=x_\w$.
Since the sequence $(x_k)_{k\in\w}$ converges to $x_\w$, for every $n\in\w$ there exists a number $k_n\in\w$ such that $x_{k_n}\in U_{n+1}$. By our assumption, there exists a path $\gamma_n:[\tfrac1{2^{n+1}},\frac1{2^n}]\to U_n$ such that $\gamma_n(\frac1{2^{n+1}})=\gamma_n(\frac1{2^{n}})=x_\w$ and $\gamma_n(\frac3{2^{n+2}})=x_{k_n}$. The paths $\gamma_n$, $n\in\w$, compose a continuous path $\gamma:[0,1]\to X$ such that $\gamma(0)=x_\w$ and $\gamma{\restriction}[\frac1{2^{n+1}},\frac1{2^n}]=\gamma_n{\restriction}[\frac1{2^{n+1}},\frac1{2^n}]$ for all $n\in\w$.
\smallskip

Taking into account that $0\in \gamma^{-1}(x_\w)\subset \gamma^{-1}(A)$ and $\frac3{2^{n+2}}\in\gamma^{-1}(x_{k_n})\subset \gamma^{-1}(X\setminus A)=X\setminus\gamma^{-1}(A)$ for all $n\in\w$, we see that the set $\gamma^{-1}(A)$ is not open in $[0,1]$.
\end{proof}

\begin{remark} Proposition~\ref{p:pathi} implies that each sequential linear topological space over the field $\IR$ is path-inductive (being locally contractible). In particular, the inductive limit $\mathbb R^\infty=\varinjlim\mathbb R^n$ of an increasing sequence of finite-dimensional Euclidean spaces is a sequential path-inductive space, which is not first-countable. On the other hand, the Sierpi\'nski triangle is first-countable locally path-connected but not locally contractible.
\end{remark}

Theorem~\ref{main} implies the following its self-generalization.

\begin{theorem}\label{t:main2} A real-valued function $f:X\to\IR$ on a path-inductive topological space $X$ is continuous if and only if $f$ is a returning function with closed graph.
 \end{theorem}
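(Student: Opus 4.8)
The plan is to deduce Theorem~\ref{t:main2} from Theorem~\ref{main} by reducing to paths. The ``only if'' part is easy: a continuous $f:X\to\IR$ has closed graph because $\IR$ is Hausdorff, and it is returning at every point $x$ with constant $M_x:=|f(x)|+1$. Indeed, given a path-connected $C_x\ni x$ and a point $y\in C_x\setminus\{x\}$, we may choose a path $\sigma:[0,1]\to C_x$ with $\sigma(0)=x$ and $\sigma(1)=y$, set $a:=\sup\{t\in[0,1]:\sigma(t)=x\}$ (so that $a<1$ and $\sigma(a)=x$, as $\sigma(1)=y\ne x$), and pick $t>a$ so close to $a$ that $\sigma(t)\notin\{x,y\}$ and $|f(\sigma(t))|<M_x$; then $z:=\sigma(t)$ lies in $C_x\setminus\{x,y\}$ with $|f(z)|<M_x\le\max\{M_x,|f(y)|\}$.

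For the ``if'' part, let $f:X\to\IR$ be a returning function with closed graph. Since $X$ is path-inductive, a set $f^{-1}(V)$ is open precisely when $(f\circ\gamma)^{-1}(V)=\gamma^{-1}(f^{-1}(V))$ is open in $[0,1]$ for every path $\gamma$; hence $f$ is continuous if and only if $g:=f\circ\gamma:[0,1]\to\IR$ is continuous for every path $\gamma:[0,1]\to X$. So I would fix such a $\gamma$ and study $g=f\circ\gamma$. First, $g$ has closed graph: if $(t_n,g(t_n))\to(t,s)$ in the metrizable space $[0,1]\times\IR$, then continuity of $\gamma$ makes the points $(\gamma(t_n),f(\gamma(t_n)))\in\Gamma_f$ converge to $(\gamma(t),s)$, whence $(\gamma(t),s)\in\Gamma_f$ and $s=f(\gamma(t))=g(t)$. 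Consequently, by Lemma~\ref{l1} applied to the closed set $[0,1]\subset\IR$, the function $g$ is weakly discontinuous and the set $C(g)$ is open and dense in $[0,1]$.

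I would then re-run the proof of Theorem~\ref{main} with $g$ in place of $f$ and $[0,1]$ in place of $\IR$. If $C(g)=[0,1]$ then $g$ is continuous and we are done; otherwise let $\C$ be the family of connected components of $C(g)$, each a non-degenerate interval relatively open in $[0,1]$. Every step of that proof following Lemma~\ref{l1} --- the analysis of the components, the applications of the Intermediate Value Theorem, the Baire and compactness arguments, Claim~\ref{cl2}, and the concluding contradiction --- carries over verbatim, being insensitive to the replacement of $\IR$ by $[0,1]$. The sole genuine exception is Claim~\ref{cl1}, whose proof invokes the returning property of the function on its domain --- and $g$ itself need not be returning (it can fail at a point $t_0$ for which $\gamma(t_1)=\gamma(t_0)$ for some $t_1$ while $\gamma$ is non-constant between $t_0$ and $t_1$). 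I expect this to be the main obstacle.

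To get past it I would prove the analogue of Claim~\ref{cl1} for $g$ directly from the returning property of $f$. Fix $C\in\C$; by Lemma~\ref{l1}(2) it suffices to show that $g{\restriction}\bar C$ is locally bounded at each point $b\in\bar C\setminus C$. Arguing exactly as in the first part of the proof of Claim~\ref{cl1} (which uses only that $g$ has closed graph, together with the Intermediate Value Theorem) one sees that if $g{\restriction}\bar C$ is not locally bounded at $b$, then $|g(t)|\to\infty$ as $t\to b$ with $t\in C$. But then the set $\{t\in C:\gamma(t)=\gamma(b)\}$ cannot accumulate at $b$, since $g$ is constantly equal to $f(\gamma(b))$ on it; hence there is a one-sided neighbourhood $I_0\subseteq C$ of $b$ on which $|g|>M:=M_{\gamma(b)}$ and $\gamma(t)\ne\gamma(b)$. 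For each $\lambda\in I_0$, applying the returning property of $f$ at the point $\gamma(b)$ to the path-connected set $\gamma([\lambda,b])$ and to the point $\gamma(\lambda)\in\gamma([\lambda,b])\setminus\{\gamma(b)\}$ produces a point $z=\gamma(t)$ with $t\in(\lambda,b)$ and $|g(t)|=|f(z)|\le\max\{M,|f(\gamma(\lambda))|\}=|g(\lambda)|$. This supplies exactly the fact needed for the second part of the proof of Claim~\ref{cl1}, namely that for every $\lambda\in I_0$ there is $t\in(\lambda,b)$ with $|g(t)|\le|g(\lambda)|$; with it one picks $c\in I_0$, lets $\lambda$ be the largest element of the set $\{t\in[c,b):|g(t)|\le|g(c)|\}$ (which is compact, since $g$ is continuous on $[c,b)$ and $|g(t)|\to\infty$ as $t\to b$), and finds $t\in(\lambda,b)$ with $|g(t)|\le|g(\lambda)|\le|g(c)|$, contradicting the maximality of $\lambda$. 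Once Claim~\ref{cl1} is available for $g$, Claim~\ref{cl2} and the final argument of the proof of Theorem~\ref{main} apply without change and give $C(g)=[0,1]$; thus $g=f\circ\gamma$ is continuous, and since $\gamma$ was arbitrary, $f$ is continuous.
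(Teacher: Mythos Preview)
Your proof is correct and follows the same overall strategy as the paper: use path-inductivity to reduce the continuity of $f$ to that of the compositions $f\circ\gamma$ for paths $\gamma:[0,1]\to X$. The paper's argument at this point is much terser than yours: it extends $f\circ\gamma$ to a map $g:\IR\to\IR$ by constant values outside $[0,1]$, asserts in one sentence that both the closedness of the graph and the returning property pass from $f$ to $g$, and then invokes Theorem~\ref{main} as a black box to conclude that $g$ is continuous.

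You take a different route after the reduction. Rather than claim that $g$ is returning --- which, as you correctly point out, is not obvious when $\gamma$ fails to be injective, since the point $y=\gamma(s)$ needed in the definition may coincide with $\gamma(t_0)$ --- you re-run the proof of Theorem~\ref{main} on $[0,1]$ and supply a direct proof of the analogue of Claim~\ref{cl1}. Your key observation is that at a boundary point $b$ of a component $C$ where $|g|\to\infty$, the set $\{t\in C:\gamma(t)=\gamma(b)\}$ cannot accumulate at $b$, so on a one-sided neighbourhood $I_0$ one has $\gamma(\lambda)\ne\gamma(b)$, and the returning property of $f$ at $\gamma(b)$ can be applied to the image $\gamma([\lambda,b])$ with $y=\gamma(\lambda)$. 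This yields exactly the inequality the second half of Claim~\ref{cl1} needs.

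In short, the paper quotes Theorem~\ref{main} after an unjustified (though ultimately harmless) assertion about $g$; your version is longer but fills precisely that step, at the cost of not being able to treat Theorem~\ref{main} as a black box. Both arrive at the same conclusion by the same mechanism.
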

 
 \begin{proof} The ``only if'' part is trivial. To prove the ``if'' part, assume that $f$ has closed graph and is returning. If $f$ is not continuous, then by the path-inductivity of $X$, there exists a path $\gamma:[0,1]\to X$ such that the function $f\circ \gamma$ is not continuous. Let $g:\IR\to\IR$ be the extension of $f\circ\gamma$ such that $g\big((-\infty,0]\big)=\{g(0)\}$ and $g\big([1,+\infty)\big)=\{g(1)\}$. The closedness of the graph of the function $f$ implies the closedness of the graph of the functions $f\circ\gamma$ and $g$. The returning property of the function $f$ implies the returning property of $g$. 
By Theorem~\ref{main}, the function $g$ is continuous, which contradicts the choice of $\gamma$. This contradiction completes the proof.
 \end{proof}
 
Theorems~\ref{main} and \ref{t:main2} motivate studying returning functions in more details. We shall show that the class of returning functions on the real line is quite wide and contains many known classes of real-valued functions possessing some generalized continuity properties.

Let us recall that a function $f:X\to \IR$ on a topological space $X$ is called
\begin{itemize}
\item {\em \'Swi\c atkowski} if for any connected subset $C\subset X$ and points $a,b\in C$ with $f(a)< f(b)$  there exists a continuity point $x\in C$ of $f$ such that $f(a)<f(x)<f(b)$;
\item {\em weakly \'Swi\c atkowski} if for  
any connected subset $C\subset X$ and points $a,b\in C$ with $f(a)< f(b)$  there exists a point $x\in C$ such that $f(a)<f(x)<f(b)$;
\item {\em Darboux} if for any connected subset $C\subset X$ the image $f(C)$ is connected;
\item {\em almost continuous} (or else {\em nearly continuous}) if for any open set $V\subset X$ the preimage $f^{-1}(V)$ is contained in the interior of $\overline{f^{-1}(V)}$;
\item {\em quasicontinuous} if for each point $x\in X$, neighborhood $V\subset Y$ of $f(x)$ and neighborhood $O_x\subset X$ of $x$, there exists a non-empty open set $G\subset O_x$ with $f(G)\subset V$;
\item {\em $B$-quasicontinuous} if for each point $x\in X$, neighborhood $V\subset Y$ of $f(x)$ and an open connected set $O\subset X$ with $x\in\overline{O}$, there exists a non-empty open set $G\subset O$ with $f(G)\subset V$;
\item ({\em weakly}) {\em Gibson} if $f(\overline{O})\subset\overline{f(O)}$ for any open (connected) subset $O\subset X$;
\item {\em peripherally continuous} if for any point $x\in X$ and neighborhoods $O_x\subset X$ and $O_{f(x)}\subset \IR$ of $x$ and $f(x)$ there exists a neighborhood $V_x\subset O_x$ of $x$ such that $f(\partial V_x)\subset O_{f(x)}$.
\item {\em peripherally bounded} if for any point $x\in X$ there exists a bounded set $B\subset \IR$ such that for any neighborhood $O_x\subset X$ of $x$ there exists a neighborhood $V_x\subset O_x$ of $x$ such that $f(\partial V_x)\subset B$.
\end{itemize}
Here by $\partial V_x$ we denote the boundary of $V_x$ in the topological space $X$. For more information on these classes of functions, see the survey of Gibson and Natkaniec \cite{GN}. 

For any function $f:X\to\IR$ these notions relate as follows (by a simple arrow we denote the implications holding under the additional assumption $X=\IR$):
$$
\xymatrix{
\mbox{\'Swi\c atkowski}\ar@{=>}[d]&&&\mbox{almost continuous}\ar^{X=\IR}[d]\\
\mbox{weakly \'Swi\c atkowski}\ar@{=>}[r]&\mbox{\bf returning}
&\mbox{peripherally bounded}\ar@{=>}[l]&\mbox{peripherally continuous}\ar@{=>}[l]\\
\mbox{Darboux}\ar@{=>}[u]&&\mbox{$B$-quasicontinuous}\ar@{=>}[r]&\mbox{weakly Gibson}\ar_{X=\IR}[u]\\
}
$$

Since each (weakly) \'Swi\c atkowski function is returning, Theorem~\ref{main} implies the following corollary that answers the original problem of Maliszewski.

\begin{corollary} A (weakly) \'Swi\c atkowski function $f:\IR\to\IR$ is continuous if and only if it has closed graph.
\end{corollary}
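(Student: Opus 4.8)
The plan is to derive the corollary from Theorem~\ref{main} together with the fact that every (weakly) \'Swi\c atkowski function $f\colon\IR\to\IR$ is returning. The ``only if'' part is trivial, since $\IR$ is Hausdorff and hence every continuous function $f\colon\IR\to\IR$ has closed graph. For the ``if'' part it is enough to prove that a weakly \'Swi\c atkowski function is returning (a \'Swi\c atkowski function is in particular weakly \'Swi\c atkowski): then Theorem~\ref{main}, applied to a weakly \'Swi\c atkowski function with closed graph, yields its continuity.

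To see that a weakly \'Swi\c atkowski $f\colon\IR\to\IR$ is returning, I would argue by contradiction. Assume $f$ is not returning at some point, which we may take to be $0$ after a translation. Spelling out the negation of the returning condition at $0$, and, for each positive integer $N$, replacing the witnessing path-connected (hence interval) set by the closed interval spanned by $0$ and the witnessing point, we obtain for every positive integer $N$ a point $y_N\ne 0$ such that
$$|f(z)|>\max\{N,|f(y_N)|\}\qquad\mbox{for every $z$ strictly between $0$ and $y_N$.}$$
Infinitely many of the $y_N$ lie on the same side of $0$, say $y_N>0$ for every $N$ in an infinite set $S$ of positive integers. The infimum of $\{y_N:N\in S\}$ must be $0$, for otherwise $|f|$ would exceed every $N\in S$ on the fixed nondegenerate interval $(0,\inf_{N\in S}y_N)$. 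Hence there are $N,N'\in S$ with $0<y_N<y_{N'}$; then $y_N$ lies strictly between $0$ and $y_{N'}$, so $|f(y_N)|>N'\ge 1$, and in particular $f(y_N)\ne 0=f(0)$.

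Now apply the weak \'Swi\c atkowski property to the connected set $[0,y_N]$ and to the pair of points $0$ and $y_N$, whose $f$-values $0$ and $f(y_N)$ are distinct: it provides a point $z\in[0,y_N]$ with $f(z)$ strictly between $0$ and $f(y_N)$. Then $z\notin\{0,y_N\}$, so on the one hand $|f(z)|<|f(y_N)|$ (because $f(z)$ lies strictly between $0$ and $f(y_N)$), while on the other hand $z$ lies strictly between $0$ and $y_N$, so the displayed inequality gives $|f(z)|>\max\{N,|f(y_N)|\}\ge|f(y_N)|$. This contradiction shows that $f$ is returning, and the corollary follows.

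The step I expect to be the main point is the reduction to the nested pair $0<y_N<y_{N'}$, which forces $f(y_N)\ne f(0)$; this is precisely what makes the intermediate value produced by the weak \'Swi\c atkowski property land in the bounded interval with endpoints $0$ and $f(y_N)$, in conflict with $|f|$ being unboundedly large near $0$. (Alternatively, one may just invoke the implication ``weakly \'Swi\c atkowski $\Rightarrow$ returning'' recorded in the diagram above and combine it with Theorem~\ref{main}.)
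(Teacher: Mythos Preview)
Your overall approach---reduce to Theorem~\ref{main} via the implication ``weakly \'Swi\c atkowski $\Rightarrow$ returning''---is exactly what the paper does; the paper simply invokes that implication (as recorded in the diagram) and does not spell it out.

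Your detailed verification of the implication, however, has a small slip. After translating the domain so that the point of non-returning is $0$, you assert ``$f(y_N)\ne 0=f(0)$'', but nothing in the setup forces $f(0)=0$; you only translated in the domain, not the range (and the returning property is not obviously invariant under vertical shifts). The fix is immediate: since $S$ is infinite, first choose $N'\in S$ with $N'>|f(0)|$, and then, using $\inf_{M\in S}y_M=0<y_{N'}$, choose $N\in S$ with $0<y_N<y_{N'}$. Then $|f(y_N)|>N'>|f(0)|$, so in particular $f(y_N)\ne f(0)$, and the weak \'Swi\c atkowski property applied to $[0,y_N]$ yields $z\in(0,y_N)$ with $f(z)$ strictly between $f(0)$ and $f(y_N)$. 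Since $|f(y_N)|>|f(0)|$, an elementary check gives $|f(z)|<|f(y_N)|$, contradicting $|f(z)|>\max\{N,|f(y_N)|\}\ge|f(y_N)|$. With this adjustment your argument is complete.
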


It is interesting to compare Theorems~\ref{main} and \ref{t:main2} to the following known results on the continuity of functions with closed graphs.

\begin{theorem} A function $f:X\to\IR$ defined on a topological space $X$ is continuous if it has closed graph and one of the following conditions is satisfied:
\begin{enumerate}
\item $f$ is almost continuous {\rm (Long and McGehee \cite{LG});}
\item $f$ is nearly continuous and $X$ is Baire {\rm (Moors \cite{Moors});}
\item $f$ is Darboux and $X$ is locally connected {\rm (W\'ojcik \cite[Corollary 18]{WPhD});}
\item $f$ is peripherally continuous and $X=\IR$ {\rm(Hagan \cite{Hagan})};
\item $X=\IR$ and $f$ is bilaterally quasicontinuous {\rm (Dobo\v s \cite{Dobos});}
\item $f$ is $B$-quasicontinuous and $X$ is locally connected {\rm (Borsik \cite{Bors})};
\item $f$ is weakly Gibson {\rm (Das and Nesterenko \cite{DN})}.
\end{enumerate}
\end{theorem}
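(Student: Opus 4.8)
The plan is to separate a single topological reduction from the case analysis and then to notice that four of the seven hypotheses — together with every $X=\IR$ instance — collapse onto Theorem~\ref{main} or onto an elementary connectedness argument, while the remaining three genuinely require the techniques of the cited papers. The first move is to extend Lemma~\ref{l1}(2) from closed subsets of $\IR$ to an arbitrary domain: if $f:X\to\IR$ has closed graph, then $f$ is continuous at a point $x$ if and only if $f$ is locally bounded at $x$. For the nontrivial direction, choose a neighbourhood $U$ of $x$ with $f(U)\subseteq K:=[-n,n]$; then $\Gamma_{f{\restriction}U}=\Gamma_f\cap(U\times K)$ is closed in $U\times K$, and since $K$ is compact Hausdorff the projection $U\times K\to U$ is a closed map (the tube lemma), so $(f{\restriction}U)^{-1}(C)=\pi_U\big(\Gamma_{f{\restriction}U}\cap(U\times C)\big)$ is closed in $U$ for every closed $C\subseteq K$; hence $f{\restriction}U$ is continuous and $f$ is continuous at $x$. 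After this, each of the seven hypotheses reduces to the following task: derive a contradiction from the existence of a point $x$ at which $f$ is not locally bounded.

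For the cases (4) and (5) — and, more generally, for every hypothesis under the standing assumption $X=\IR$ — I would use the implication diagram recorded above: peripheral continuity implies peripheral boundedness implies the returning property, while bilateral quasicontinuity implies $B$-quasicontinuity, hence weak Gibson, hence (on $\IR$) peripheral continuity, hence again the returning property. In each of these situations $f$ is a returning function with closed graph, so Theorem~\ref{main} delivers continuity. If one wishes to bypass the diagram for (5), one may instead quote Dobo\v s \cite{Dobos} directly.

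For (3) and (6), note that local connectedness does not entail path-inductivity, so Theorem~\ref{t:main2} does not apply directly; instead I would argue by hand. Suppose $f$ is Darboux with closed graph but, by the reduction above, not locally bounded at some $x$. Since $X$ is locally connected, $x$ has a neighbourhood base of open connected sets $C$, and for each such $C$ the image $f(C)$ is an interval which is unbounded (because $C$ is a neighbourhood of $x$) and contains $f(x)$, hence meets $\{-M,M\}$ with $M:=|f(x)|+1$. Running over this base produces a net $x_\alpha\to x$ with $f(x_\alpha)\in\{-M,M\}$; passing to a subnet we may take $f(x_\alpha)$ constantly equal to $M$ (or to $-M$), and then $\langle x_\alpha,M\rangle\to\langle x,M\rangle$ forces $\langle x,M\rangle\in\Gamma_f$, i.e. $f(x)=M$, contradicting $M\ne f(x)$. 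For (6) I would run the same scheme, using $B$-quasicontinuity to place inside each open connected neighbourhood of $x$ a nonempty open set on which $f$ is close to a prescribed value, or — more economically — observe that $B$-quasicontinuity implies weak Gibson and defer to case (7).

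The cases (1) almost continuity on an arbitrary $X$, (2) near continuity with $X$ a Baire space, and (7) weak Gibson on an arbitrary $X$ lie outside the path- and connectedness-machinery of this paper, and I would follow the original proofs: Long and McGehee \cite{LG} for (1), Moors \cite{Moors} for (2), Das and Nesterenko \cite{DN} for (7). Their skeleton agrees with the steps above — near a discontinuity point $x$ at which $f$ is unbounded, manufacture a net converging to $x$ along which $f$ takes a fixed value distinct from $f(x)$, contradicting closedness of the graph — but the constructions genuinely invoke the ambient hypothesis (Stallings almost continuity, the Baire property, the behaviour of $f$ on closures of open connected sets). These are exactly where I expect the work to concentrate: the statement is in the end a compendium in which (3)--(6) and all $X=\IR$ instances fall out of Theorem~\ref{main} and the two elementary arguments above, whereas (1), (2) and (7) require the cited results.
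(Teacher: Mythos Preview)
The paper does not prove this theorem at all: it is stated purely as a compendium of known results, each clause carrying a citation to the literature, with no argument supplied. So there is nothing to compare your proposal against on the paper's side.

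That said, your proposal is more than the paper offers, and the parts you do argue look correct. The extension of Lemma~\ref{l1}(2) to arbitrary $X$ via the tube lemma is standard and sound. Your direct argument for~(3) is clean: an unbounded interval containing $f(x)$ must meet $\{-M,M\}$ with $M=|f(x)|+1$, and the resulting net contradicts closedness of the graph. The reductions of (4) and (5) to Theorem~\ref{main} through the implication diagram are legitimate --- on $\IR$, bilateral quasicontinuity does imply $B$-quasicontinuity (an open connected set whose closure contains $x$ is an interval with $x$ in it or at an endpoint, and one-sided quasicontinuity handles the endpoint case), and the remaining arrows are those recorded in the paper. Deferring (1), (2), (7) to the cited sources is exactly what the paper itself does, only you are explicit about it.

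In short: your write-up is not a reproduction of the paper's proof, because there is none; it is an independent and essentially correct sketch that goes beyond the paper by showing how several clauses follow from the paper's own Theorem~\ref{main}.
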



 \end{document}